\documentclass[11pt,twoside]{article}
\usepackage{amsmath,amsfonts,amssymb,theorem,amscd}

\numberwithin{equation}{section}

\newtheorem{theorem}{Theorem}[section]
\newtheorem{lemma}{Lemma}[section]
\newtheorem{proposition}{Proposition}[section]

\newtheorem{definition}{Definition}[section]

\usepackage[usenames,dvipsnames]{pstricks}
\usepackage{epsfig}
\usepackage{pst-grad} 
\usepackage{pst-plot} 

\def\XXint#1#2#3{{\setbox0=\hbox{$#1{#2#3}{\int}$}
     \vcenter{\hbox{$#2#3$}}\kern-.5\wd0}}

\newcommand{\qed}{\hfill\fbox{}\par\vspace{.2cm}}

\newcommand{\norm}[1]{\left\Vert #1 \right\Vert}
\newcommand{\abs}[1]{\left| #1 \right|}

\newenvironment{proof}{{\bf Proof.}} {\hfill\qed}

\newcommand{\R}{{ \mathbb{R}  }}
\newcommand{\T}{{ \mathbb{T}  }}

\begin{document}
\renewcommand{\thesection}{\arabic{section}}

\newcommand{\pr}{\partial}
\newcommand{\nl}{\vskip 1pc}
\newcommand{\co}{\mbox{co}}
\newcommand{\bbr}{\mathbb R}

\title{A Weighted Regularity Criterion for Suitable Weak Solutions of Incompressible Non-Newtonian Fluids}

\author{Jae-Myoung Kim}
\date{}
\maketitle \centerline{\textit{Department of Mathematics Education}}
\centerline{\textit{Gyeongkuk  National University, Andong, Republic of
Korea}} \centerline{\textit{jmkim02@gknu.ac.kr}}

\begin{abstract}
It establishes a regularity criterion for non-Newtonian fluids in $\mathbb{R}^3$ in terms of the weighted gradient of the velocity field, based on the
Caffarelli--Kohn--Nirenberg inequality.
\end{abstract}

\bigskip
\bigskip

\vspace{-5mm} \noindent {2000 AMS Subject Classification}: 76A05,
76D03, 49N60
\newline {Keywords} : non-Newtonian fluid; weighted regularity criteria, Caffarelli-Kohn-Nirenberg inequality.

\section{Introduction}

In this paper, we study the non-Newtonian fluid of shear-thinning type:
\begin{equation}\label{NNS}
\left\{
\begin{array}{ll}
\displaystyle u_t- \nabla \cdot S +(u \cdot
\nabla) u +\nabla \pi=0,\\
\vspace{-3mm}\\
\displaystyle \text{div} \ u =0,
\end{array}\right.
\quad \mbox{ in } \,\,Q_T:=\R^3\times (0,\, T),
\end{equation}
where $u:\R^3\times (0,\, T)\rightarrow\R^3$ is the flow velocity
vector and $\pi:\R^3\times (0,\, T)\rightarrow\R$ is the pressure.
Also, $S=(S_{ij})_{i,j=1,2,3}$ is the stress tensor depending on the
deformation tensor $Du = (\nabla u + \nabla u^{T})/2$. We consider
the initial value problem of \eqref{NNS}, which requires initial
conditions
\begin{equation}\label{NNS-30}
u(x,0)=u_0(x).
\end{equation}
We assume that the initial data $u_0(x) \in L^2(\Omega)$ hold the
incompressibility, i.e. $ \text{div} \ u_0(x)=0$. To motivate the
conditions on the stress tensor $S$ is called "power law fluids". This paper deals with the following forms
\begin{equation}\label{S-example-10}
S(Du) = (\mu_0+\mu_1|Du|^{q-2})Du, \quad 1 < q < +\infty.
\end{equation}
where $\mu_0>0$ and $\mu_1>0$ are constants (see e.g. \cite{L69},
\cite{B}).

We review the known results regarding existence of solutions related to our result. 
For $\mu_0\ge 0$ and $\mu_1>0$, the existence of weak
solutions for $\frac{3n+2}{n+2}\leq q$ was firstly established in \cite{L69}, and later,
in \cite{D-R-W10}, the range of $q$ was extended up to $
\frac{2n}{n+2}<q$.
If $\mu_0> 0$ and  $\frac{n+2}{2}\le q$, it was shown that  the weak solution is
unique (see e.g. \cite{M-N-R-R} and compare to \cite{Lion69}). 
M\'alek et.al, in \cite{M-N-R-R} proved that the unique strong
solution exists globally in time for $q \geq \frac{3n+2}{n+2}$ in
the case of periodic domains of $\R^n$, $n\ge 3$ (see \cite{P} for the case of the whole space).  

We report the known results related to the regularity criteria of a weak solution in Navier-stokes equations of non-Newontian type. For a thinning fluid, that is, $\frac{8}{5} < p < 2$, Bae et al. \cite{BCK99} obtained the
regularity criterion of Serrin type for Ostwald-de Waele flows (i.e. a class of shear thinning fluids):
\[
\norm{u}_{L^{\alpha,\beta}_{x,t}(Q_T)}:=\norm{\norm{u}_{L^\alpha(\R^3)}}_{L^\beta(0,T)}<\infty,
\quad \frac{3}{\alpha}+
\frac{5p-6}{2\beta}\leq  \frac{5p-8}{2},\quad \alpha
> \frac{6}{5p-8}.
\]
For a point $z=(x,t)\in \R^3\times (0,T)$, we denote
$B_\rho(x)=\{y\in\R^3: \abs{y-x}<\rho\}$,
\[
Q_\rho(z)=B_\rho(x)\times (t-\rho^2,t),\quad \rho<\sqrt{t}.
\]
On the other hand, the authors \cite{K-M-S02} showed that the global-in-time existence of a solution to the problem \eqref{NNS} considered such that $u\in C^{1,\alpha} (\T^2\times (0,T))$, $ 0<\alpha< 1$, provided $q>\frac{4}{3}$ and some conditions on the function the external force $f$ are fulfilled.

%

In the present paper, we give regularity conditions of the weighted type
for a weak solution to the 3D the non-Newtonian fluid of shear-thinning type which is extensions of
results in \cite{Yong09}. For a proof, we use the energy estimate based on the
Caffarelli-Kohn-Nirenberg and Stein's inequalities. After
then, using $\epsilon$-regularity criteria for the suitable weak solution, we
show a weak solution is semi-regular.

%

Next, we are to state the second part of our main result.

\begin{theorem}\label{thm1}
Let $q>2$. Assume that $u_0 \in H^1(\R^3)$ with $\nabla
\cdot u_0 = 0$ satisfies
\[
\sup_{x_0\in \R^3}\|x- x_0|^{-\frac{1}{2}}u_0\|_{L^2} < \infty, 
\]and 
if a weak solution
$u$ satisfies
\[\sup_{x_0\in \R^3}\||x- x_0|^{\beta}\nabla u\|_{L_{x,t}^{r,s}}<
\infty,\quad \sup_{x_0\in R^3}\int_0^T \|x-
x_0|^{-\frac{4}{3}}|\nabla u(\cdot,\tau)|^{q-1}\|^{\frac{3}{2}}_{L^{\frac{3}{2}}}\, d\tau<
\infty,
\]
with
\[
\frac{3}{r}+ \frac{5q-6}{2s} = \frac{5q-8}{2}+1- \beta, \quad 1 <s < \infty,
\]
\[
 \frac{3}{ \frac{5q-8}{2}+1- \beta}< r < \infty, \quad -1 \leq  \beta <\frac{5q-6}{2}.
\]
Then  the weak solution $u$ is semi-regular up to time $T>0$. 
\end{theorem}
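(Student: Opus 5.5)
Following the strategy announced in the introduction and the model of \cite{Yong09}, the plan is a weighted local energy estimate centred at an arbitrary point $x_0$, followed by an $\epsilon$-regularity criterion for suitable weak solutions. Fix $x_0\in\R^3$ and write $w=|x-x_0|^{-1}$. We test \eqref{NNS} against $w\,u$; the weight is admissible in the weak formulation after a standard truncation $w_\delta=(|x-x_0|^2+\delta^2)^{-1/2}$ and $\delta\to0$. This gives
\begin{equation*}
\frac12\frac{d}{dt}\int \frac{|u|^2}{|x-x_0|}\,dx+\int \frac{S(Du):Du}{|x-x_0|}\,dx
= -\int S(Du):(\nabla w\otimes u)\,dx+\frac12\int |u|^2\,u\cdot\nabla w\,dx+\int \pi\, u\cdot\nabla w\,dx ,
\end{equation*}
with $|\nabla w|\le |x-x_0|^{-2}$. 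The left side controls
\[
\mu_0\int |x-x_0|^{-1}|Du|^2\,dx+\mu_1\int |x-x_0|^{-1}|Du|^q\,dx .
\]
The hypothesis on $|x-x_0|^{-1/2}u_0$ makes the initial weighted energy finite and uniform in $x_0$.

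\emph{Viscous term.} The $\mu_0$ part is handled by Cauchy--Schwarz and the Hardy inequality, which is the Caffarelli--Kohn--Nirenberg inequality with $\int|x|^{-3}|u|^2\lesssim\int|x|^{-1}|\nabla u|^2$, together with Korn's inequality in weighted form. For the $\mu_1|Du|^{q-2}Du$ part we use the bound
\[
\int |Du|^{q-1}|u|\,|x-x_0|^{-2}\,dx\le \big\||x-x_0|^{-4/3}|\nabla u|^{q-1}\big\|_{L^{3/2}}\,\big\||x-x_0|^{-2/3}u\big\|_{L^3}.
\]
The factor $\||x-x_0|^{-2/3}u\|_{L^3}$ is controlled by CKN via the weighted energy norms, and Young's inequality then puts the $L^{3/2}$-in-time term from the second hypothesis into an integrable forcing. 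This is exactly why that hypothesis appears.

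\emph{Convective term.} We bound $\int |u|^3|x-x_0|^{-2}$ by splitting off $|x-x_0|^\beta|\nabla u|$ and interpolating with CKN between $\sup_t\int|x-x_0|^{-1}|u|^2$ and the $q$-dissipation $\int|x-x_0|^{-1}|\nabla u|^q$. The scaling relation $\frac3r+\frac{5q-6}{2s}=\frac{5q-8}{2}+1-\beta$ is precisely the one making the exponents of Hölder and Young close. The result should be a bound of the form $\|\,|x-x_0|^\beta\nabla u\|_{L^r}^s\,E(t)$ plus an absorbable piece. The restrictions $r>3/(\frac{5q-8}{2}+1-\beta)$ and $-1\le\beta<\frac{5q-6}{2}$ are the admissibility conditions of CKN, so that its parameters lie in the allowed range.

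\emph{Pressure term.} We write $\pi=R_iR_j(u_iu_j)-R_iR_jS_{ij}$ and use Stein's weighted inequality. The weights $|x|^{-2/3}$ on $L^{3/2}$ and $|x|^{-4/3}$ on $L^{3/2}$ lie in $A_{3/2}$, so Riesz transforms are bounded on these weighted spaces. This reduces the pressure terms to the two previous ones.

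Gronwall's inequality then yields, uniformly in $x_0$,
\[
\sup_{t<T}\int\frac{|u|^2}{|x-x_0|}\,dx+\int_0^T\!\!\int\frac{|\nabla u|^2+|\nabla u|^q}{|x-x_0|}\,dx\,dt<\infty .
\]
For a point $z=(x_0,t_0)$ this gives
\[
\rho^{-1}\int_{Q_\rho(z)}|\nabla u|^2\le \int_{t_0-\rho^2}^{t_0}\!\int_{B_\rho}\frac{|\nabla u|^2}{|x-x_0|} ,
\]
and the right side tends to $0$ as $\rho\to0$ by absolute continuity of the integral. The $\epsilon$-regularity criterion for suitable weak solutions of shear-thinning fluids (CKN-type smallness of $\limsup\rho^{-1}\int_{Q_\rho}|\nabla u|^2$) then makes every point regular, i.e.\ $u$ is semi-regular up to $T$.

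The main obstacle is the convective and pressure estimate: making the weighted CKN interpolation close with exactly the stated exponent relation for $q>2$ over the whole range of $\beta$. It is delicate to check that the CKN parameter constraints hold and that the Stein weights stay in the right Muckenhoupt class. I would first carry out the computation for $\beta=0$, where it reduces to the Serrin-type criterion of \cite{BCK99}, and then shift the weight.
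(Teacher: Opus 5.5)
Your overall architecture (testing with $|x-x_0|^{-1}u$, CKN for the convective term, weighted Calder\'on--Zygmund/Stein bounds for the pressure, Gronwall, then $\epsilon$-regularity) is the paper's, but two steps fail as written.

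First, the linear viscous term. The inequality $\int_{\R^3}|x-x_0|^{-3}|u|^2\,dx\lesssim\int_{\R^3}|x-x_0|^{-1}|\nabla u|^2\,dx$ is false. The weight $|x-x_0|^{-1}$ is exactly the critical case of the weighted Hardy inequality in $\R^3$: the best constant in $\int|x|^{a-2}|u|^2\le C\int|x|^{a}|\nabla u|^2$ is $4/(1+a)^2$, which is infinite at $a=-1$. Moreover, the left side is $+\infty$ as soon as $u(\cdot,t)$ does not vanish at $x_0$. The missing idea is that this term needs no estimate at all. Since $\nabla\cdot u=0$, one has $\nabla\cdot(\mu_0Du)=\frac{\mu_0}{2}\Delta u$, and
\[
-\frac{\mu_0}{2}\int\Delta u\cdot w_\delta u\,dx=\frac{\mu_0}{2}\int w_\delta|\nabla u|^2\,dx+\frac{3\mu_0\delta^2}{4}\int\frac{|u|^2}{(|x-x_0|^2+\delta^2)^{5/2}}\,dx .
\]
Both terms are nonnegative because $w_\delta$ is superharmonic. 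This sign (the $\int|u|^2\Delta\varphi$ term in (2.1)) is what lets the paper simply drop the $\mu_0$ contribution. By contrast, your handling of the $\mu_1|Du|^{q-2}Du$ term departs from the paper and is the better route. Your H\"older step into $\||x-x_0|^{-4/3}|\nabla u|^{q-1}\|_{L^{3/2}}\||x-x_0|^{-2/3}u\|_{L^3}$, followed by Young, produces only the second hypothesis and the convective quantity. The paper's passage through $\int|u|^q|x-x_0|^{-q-1}$ instead leaves a term $\||x-x_0|^{-1}u\|_{L^2}^q$ that no hypothesis controls.

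Second, and more seriously, the convective step you single out as the main obstacle does not close with the stated exponent relation; it is not merely unchecked. Set $E=\int|x-x_0|^{-1}|u|^2$. Under $u\mapsto\mu\,u(x_0+\lambda(\cdot-x_0))$, the quantities $\int|x-x_0|^{-2}|u|^3$, $\int|x-x_0|^{-1}|\nabla u|^q$ and $E\,\||x-x_0|^{\beta}\nabla u\|_{L^r}^s$ scale like $\mu^3\lambda^{-1}$, $\mu^q\lambda^{q-2}$ and $\mu^{2+s}\lambda^{-2+s(1-\beta-3/r)}$ respectively. Hence a bound $\int|x-x_0|^{-2}|u|^3\le\frac1{16}\int|x-x_0|^{-1}|\nabla u|^q+C\,E\,\||x-x_0|^\beta\nabla u\|_{L^r}^s$ can hold only if
\[
\frac3r+\frac{2}{(3-q)s}=\frac{2}{3-q}-\beta .
\]
This agrees with $\frac3r+\frac{5q-6}{2s}=\frac{5q-6}{2}-\beta$ only for $q=2$ and $q=\frac{11}{5}$. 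For $2<q<\frac{11}{5}$, replacing $E$ by $E^\alpha$ with $\alpha\le1$, or also using the $\mu_0$-dissipation, does not help: the stated relation would require $\alpha>1$, and Gronwall no longer closes. (For $q>\frac{11}{5}$ scaling leaves room, but that is the range $q\ge\frac{3n+2}{n+2}$ quoted in the introduction, where strong solutions are global anyway.)

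The paper does write out the CKN bookkeeping you skip. However, carrying its own constraints $\theta_1=\frac{q(s-1)}{3(s+2-q)}$, $\theta_2=\frac{2}{s+2}$ through its two scaling identities also yields the $\frac{2}{3-q}$ relation, so the step is not supplied there either. Likewise, the ranges of $r,\beta$ that you attribute to CKN admissibility are unverified; the lower bound on $r$ merely encodes $s<\infty$.

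Finally, the $\epsilon$-regularity result available here, Proposition~\ref{ep-reg}, concerns $\rho^{2q-5}\int_{Q_\rho}|\nabla u|^q$ with $q\le\frac52$, not $\rho^{-1}\int_{Q_\rho}|\nabla u|^2$. Your weighted $q$-dissipation does give it, since $\rho^{2q-5}\le\rho^{-1}$ for $q\ge2$ and $\rho<1$, but the restriction $q\le\frac52$ has to be imposed.
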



\section{Preliminaries and proofs}
\subsection{Preliminaries}

In this section, we introduce some notations, and the notion of a
weak solution and the essential well-known result for the equations
\eqref{NNS}--\eqref{NNS-30}. For $1 \leq q \leq \infty $,  $W^{k,q}( \R^3 )$ indicates
the usual Sobolev space with standard norm $\norm{\cdot}_{k,q}$,
i.e., $W^{k,q}(\R^3) = \{ u \in L^{q}( \R^3 )\,:\, D^{ \alpha }u \in
L^{q}( \R^3 ), 0 \leq | \alpha | \leq k \}$. For vector fields $u,v$
we write $(u_iv_j)_{i,j=1,2,3}$ as $u\otimes v$. We denote by
$C=C(\alpha,\beta,...)$ a constant depending on the prescribed
quantities $\alpha,\beta,...$, which may change from line to line.


 Next we recall suitable weak solutions (localized weak solution) for
the the equations
\begin{definition}
Let $1<q<\infty$. The pair $(u,\pi)$ is called a suitable weak solution to the incompressible non-Newtonian fluids \eqref{NNS}--\eqref{NNS-30} on an open set
$D \subset \mathbb{R}^3 \times \mathbb{R}$, if the following conditions are met:

\begin{enumerate}
\item[(1)] \textit{(Integrability hypotheses)} $u,\pi$ are measurable functions on $D$ and
\begin{enumerate}
\item[(a)] $\pi \in L^{5/4}(D)$,
\item[(b)] for some constants $E_0,E_1<\infty$,
\[
\int_{\Omega} |u|^2 \, dx \le E_0,
\]
for almost every $t$, and
\[
\int_D |\nabla u|^p \, dx\,dt \le E_1 .
\]
\end{enumerate}

\item[(2)] \textit{(Equations)} $u,\pi$ satisfy \eqref{NNS}--\eqref{NNS-30} in the sense of distributions on $D$.

\item[(3)] \textit{(Generalized energy inequality)}  
For each real-valued function $\varphi \in C_0^\infty(D)$ with $\varphi \ge 0$,
the inequality
\begin{align}
&\int_{\Omega} |u|^2 \varphi
+ 2 \int_D \bigl( |\nabla u|^2 + |e(u)|^p \bigr) \varphi  \nonumber\\
&\qquad \le
\int_D |u|^2 (\partial_t \varphi + \Delta \varphi)-2\int_D |Du|^{q-2}Du:(u \otimes \nabla \varphi)+ 2 \int_D ( |u|^2 + 2\pi )\, u \cdot \nabla \varphi
\tag{2.1}
\end{align}
holds, where
\[
E_0(u)=\sup_{0\le t \le T} \int_{\Omega} |u|^2 \, dx,
\qquad
E_1(u)=\int_0^T \int_{\Omega} |\nabla u|^p \, dx\,dt .
\]
\end{enumerate}
\end{definition}


For suitable weak solutions to the the equations
\eqref{NNS}--\eqref{NNS-30}, it is possible to get
the following $\epsilon$-regularity theorem (see \cite{GZ02}).
\begin{proposition}\label{ep-reg}
Suppose that $u(x,t)$ is a suitable weak solution to
\eqref{NNS}--\eqref{NNS-30}  such that
\[
\limsup_{r\to 0} r^{2p-5} \int_{Q_r(x,t)} |\nabla u|^{p}\,dx\,dt \le \varepsilon,
\qquad p \leq \frac{5}{2},
\]
for some constant $\varepsilon$. Then
\[
|u(x,t)| \le C.
\]
That is, $(x,t)$ is a regular point.
\end{proposition}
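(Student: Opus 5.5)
The plan is to prove Proposition~\ref{ep-reg} by the Caffarelli--Kohn--Nirenberg/Lin scheme, adapted to the $p$-Laplacian-type stress. Fix $z_0=(x_0,t_0)$ and work with the scale-invariant quantities
\[
A(r)=\sup_{t_0-r^2<t<t_0} r^{-1}\int_{B_r}|u|^2,\quad E(r)=r^{-1}\int_{Q_r}|\nabla u|^2,\quad G(r)=r^{2p-5}\int_{Q_r}|\nabla u|^p,
\]
\[
C(r)=r^{-2}\int_{Q_r}|u|^3,\qquad D(r)=r^{-2}\int_{Q_r}|\pi|^{3/2}.
\]
The hypothesis says $G(r)\le\varepsilon$ for all small $r$. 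The target is an estimate for $A$ and $C$ at a fixed small radius, which is then propagated to all neighbouring points.

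\textbf{Step 1: energy estimate.} I test the generalized energy inequality (2.1) with a cutoff $\varphi$ supported in $Q_{\rho}$, equal to $1$ on $Q_{r}$, with $|\nabla\varphi|\lesssim\rho^{-1}$ and $|\partial_t\varphi|+|\Delta\varphi|\lesssim\rho^{-2}$. This gives
\[
A(r)+E(r)\lesssim (\rho/r)\Bigl(C(\rho)^{2/3}+C(\rho)+D(\rho)^{2/3}C(\rho)^{1/3}\Bigr)+ (\rho/r)\,\rho^{-2}\!\int_{Q_\rho}|Du|^{q-1}|u|.
\]
The extra non-Newtonian term is handled by H\"older's inequality, bounding it by $\|\nabla u\|_{L^p}^{p-1}\|u\|_{L^p}$. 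Since $p\le 5/2$, the exponents can be arranged so that this contribution is controlled by $G(\rho)^{(p-1)/p}C(\rho)^{1/3}$ times a power of $\rho$. This is exactly where the restriction $p\le 5/2$ enters.

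\textbf{Step 2: decay of $C$ and $D$.} Using the interpolation $L^\infty_tL^2_x\cap L^p_tW^{1,p}_x\subset L^3$ together with Poincar\'e's inequality for $u-\bar u_r$, I obtain
\[
C(r)\lesssim (r/\rho)^{3}C(\rho)+(\rho/r)^{a}A(\rho)^{\theta}G(\rho)^{1-\theta}
\]
for some $a>0$ and $\theta\in(0,1)$. For the pressure I take the divergence of the equation, so that $-\Delta\pi=\partial_i\partial_j(u_iu_j-S_{ij})$, and split $\pi=\pi_1+\pi_2$. Here $\pi_1$ is the Calder\'on--Zygmund part of the localized right-hand side, and $\pi_2$ is harmonic in $B_\rho$. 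This yields
\[
D(r)\lesssim (r/\rho)D(\rho)+(\rho/r)^{2}\Bigl(C(\rho)+\rho^{-2}\!\int_{Q_\rho}|\nabla u|^{\frac32(q-1)}\Bigr).
\]
I expect this to be the main obstacle: the stress term $|Du|^{q-2}Du$ must be controlled in $L^{3/2}$ by powers of $G$, which again requires $p\le 5/2$ (or $q-1\le \tfrac23p$). If this fails directly, I would instead measure the pressure in $L^{p'}$ and rerun the iteration with $D_{p'}$ in place of $D$.

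\textbf{Step 3: iteration and conclusion.} Combining Steps 1 and 2 with $\rho=\theta^{-1}r$ gives $F(\theta r)\le \tfrac12F(r)+C_\theta\varepsilon^{\gamma}$ for $F=C+D^{4/3}$ (suitable powers). Choosing $\theta$ small and then $\varepsilon$ small, $F(r)$ becomes small for all small $r$, uniformly for centers near $z_0$. Feeding this back into Step 1 gives $\limsup_{r\to0}\bigl(A(r)+C(r)+D(r)\bigr)\le\varepsilon_1$ at every point of a neighbourhood of $z_0$. A further Campanato iteration, or Lin's compactness/blow-up argument, then gives $r^{-5}\int_{Q_r(z)}|u-\bar u_{z,r}|^3\le Cr^{3\alpha}$. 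Hence $u$ is H\"older continuous near $z_0$, in particular $|u(x,t)|\le C$, so $(x,t)$ is a regular point.
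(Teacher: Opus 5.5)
The paper gives no proof of this proposition; it imports it from the cited $\varepsilon$-regularity theorem. Your CKN-type scheme therefore has to stand on its own, and it has a genuine gap at the non-Newtonian terms. All of $A,C,D,E,G$ are invariant under the Navier--Stokes scaling $u_\lambda(x,t)=\lambda u(\lambda x,\lambda^2t)$, $\pi_\lambda=\lambda^2\pi(\lambda x,\lambda^2 t)$. The stress is not: after this rescaling the coefficient of $\nabla\cdot(|Du|^{q-2}Du)$ becomes $\mu_1\lambda^{4-2q}$. Take $p=q$, as in the definition of suitable weak solution and in the paper's application with $q>2$. Then your H\"older step in Step 1 actually gives
\[
\frac{1}{r\rho}\int_{Q_\rho}|\nabla u|^{p-1}|u|\lesssim \frac{\rho}{r}\,\rho^{4-2p}\,G(\rho)^{\frac{p-1}{p}}C(\rho)^{\frac13},
\]
and the stress part of the pressure in Step 2 gives
\[
\rho^{-2}\int_{Q_\rho}|\nabla u|^{\frac32(p-1)}\lesssim \rho^{6-3p}\,G(\rho)^{\frac{3(p-1)}{2p}}.
\]
For $p>2$ both powers of $\rho$ are negative. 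These contributions are then not small, and not even bounded, as $\rho\to0$, so the recursion $F(\theta r)\le\frac12F(r)+C_\theta\varepsilon^\gamma$ of Step 3 cannot hold uniformly in $r$.

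Rearranging exponents does not help, because the problem is dimensional. The quantity $T(u;r,\rho)=r^{-1}\rho^{-1}\int_{Q_\rho}|\nabla u|^{p-1}|u|$ satisfies $T(u;\lambda r,\lambda\rho)=\lambda^{4-2p}\,T(u_\lambda;r,\rho)$ under the scaling that fixes $A,C,D,E,G$. Young's inequality against the dissipation $\int|Du|^p\varphi$ does not help either: it produces $\rho^{-p}\int_{Q_\rho}|u|^p$, which carries the same factor $\rho^{4-2p}$. In particular, $p\le 5/2$ is not what makes these steps work. The H\"older steps only need $p\le 3$ (both $\|u\|_{L^p}\lesssim\|u\|_{L^3}$ and $\frac32(p-1)\le p$), and the sign of the exponent is decided by $p$ versus $2$. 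The same defect affects your final step: a blow-up or Campanato argument in Navier--Stokes scaling assumes the limiting system is Stokes, which fails when the stress coefficient diverges.

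For $p<2$ the powers of $\rho$ are favourable, but Step 2 still fails below a threshold. Interpolating $L^\infty_tL^2_x$ with $L^p_tW^{1,p}_x$ into $L^3_{t,x}$ via Sobolev--Poincar\'e and H\"older in time requires $p\ge 9/5$. So even there the argument as written does not cover the stated range.

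What is missing is a mechanism that controls the stress contributions, both in the local energy inequality and in the pressure decomposition, at small scales. That could be scaled quantities adapted to the $p$-structure that exploit the extra dissipation. Alternatively, as the paper does, you can appeal to the known $\varepsilon$-regularity theorem rather than rederive it in Navier--Stokes scaling.
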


Next, we see an weighted interpolation inequality in \cite{CKN} or
\cite[Lemma 2.1]{Yong09}.
\begin{lemma}\label{wip}
Let $n\ge1$ and let $1\le p,q<\infty$. There exists a constant $C>0$ such that for all
$u\in C_c^\infty(\mathbb{R}^n\setminus\{0\})$,
\begin{equation}\label{CKN:inequality}
\bigl\||x|^{-\gamma} u\bigr\|_{L^p(\mathbb{R}^n)}
\le
C
\bigl\||x|^{-\alpha}\nabla u\bigr\|_{L^2(\mathbb{R}^n)}^{\theta}
\bigl\||x|^{-\beta}u\bigr\|_{L^q(\mathbb{R}^n)}^{1-\theta}.
\end{equation}
if and only if the following relations hold:
\begin{equation}\label{CKN:scaling}
\frac1p + \frac{\alpha}{n}
=
\theta\!\left(\frac12 - \frac1n\right)
+
(1-\theta)\!\left(\frac1q + \frac{\beta}{n}\right),
\qquad 0\le\theta\le1,
\end{equation}
together with the admissibility conditions
\begin{equation}\label{CKN:admissible}
\alpha \le \theta,
\qquad
\beta \le 1-\theta,
\qquad
\gamma = \theta(\alpha-1) + (1-\theta)\beta.
\end{equation}
and $\alpha - 1 \le \gamma \le \alpha \quad \text{(if } p=q=2 \text{ case)}$
\end{lemma}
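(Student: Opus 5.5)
The plan is to prove the two directions of Lemma \ref{wip} separately. For necessity I would use scaling and concentration test functions. For sufficiency I would factor the weight with H\"older's inequality, which reduces \eqref{CKN:inequality} to a first-order weighted Sobolev (Hardy--Sobolev) inequality, and prove that on dyadic annuli. Throughout I treat $u\in C_c^\infty(\mathbb{R}^n\setminus\{0\})$, so every weighted norm is finite and integrations by parts produce no boundary terms at the origin. One caveat: as printed, the admissibility condition \eqref{CKN:admissible} fixes $\gamma$ exactly, whereas the original statement in \cite{CKN} allows a range of $\gamma$. I would therefore prove the lemma in the form stated and check at the end that the resulting exponent constraints agree with \cite{CKN} and \cite[Lemma 2.1]{Yong09}.

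\emph{Necessity.} Insert the dilates $u_\lambda(x)=u(\lambda x)$ into \eqref{CKN:inequality}. Each weighted norm scales by a power of $\lambda$. For example, $\||x|^{-\gamma}u_\lambda\|_{L^p}=\lambda^{\gamma-n/p}\||x|^{-\gamma}u\|_{L^p}$, and the gradient term gives $\lambda^{1+\alpha-n/2}$. Letting $\lambda\to0$ and $\lambda\to\infty$ forces the exponents to balance, and that balance is exactly the relation \eqref{CKN:scaling}, once $\gamma$ is related to $\alpha,\beta$ as in \eqref{CKN:admissible}. To get the inequality constraints, I would test with a bump $\phi(x)$ translated to a point $x_0$ with $|x_0|$ large and then dilated by $\mu\ll|x_0|$. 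On the support all weights are comparable to constants, so \eqref{CKN:inequality} collapses to an unweighted Gagliardo--Nirenberg inequality with a prefactor $|x_0|^{\text{power}}$. Sending $|x_0|\to0$ or $|x_0|\to\infty$ independently of $\mu$ then forces the power to vanish and produces the sign conditions $\alpha\le\theta$ and $\beta\le1-\theta$. In the case $p=q=2$, I would use test functions concentrated on thin annuli near $0$ and near $\infty$. These show that $\gamma$ must lie in $[\alpha-1,\alpha]$, and this is precisely where the Hardy inequality fails outside that interval.

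\emph{Sufficiency.} Since $\gamma=\theta(\alpha-1)+(1-\theta)\beta$, I would write $|x|^{-\gamma}|u|=(|x|^{-(\alpha-1)}|u|)^{\theta}(|x|^{-\beta}|u|)^{1-\theta}$. H\"older's inequality with $\frac1p=\frac{\theta}{p_1}+\frac{1-\theta}{q}$ then gives
\[
\bigl\||x|^{-\gamma}u\bigr\|_{L^p}\le \bigl\||x|^{-(\alpha-1)}u\bigr\|_{L^{p_1}}^{\theta}\bigl\||x|^{-\beta}u\bigr\|_{L^{q}}^{1-\theta}.
\]
It therefore suffices to prove the first-order inequality $\||x|^{-(\alpha-1)}u\|_{L^{p_1}}\le C\||x|^{-\alpha}\nabla u\|_{L^2}$, with $p_1$ determined by \eqref{CKN:scaling}. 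When $p_1=2$ this is the weighted Hardy inequality. I would prove it by integrating by parts against the vector field $x|x|^{-2\alpha}$ and applying Cauchy--Schwarz, which gives a constant $(n-2-2\alpha)^{-1}$ provided $\alpha\ne\frac{n-2}{2}$. For general $p_1$ I would decompose $\mathbb{R}^n$ into dyadic annuli $A_k=\{2^k\le|x|<2^{k+1}\}$. On each annulus the weights are constants, so the ordinary Sobolev--Poincar\'e inequality applies, and rescaling to the unit annulus shows that the constants are uniform in $k$ exactly because of the scaling relation. The mean values on the annuli are then controlled by the Hardy inequality already established. Finally, summing over $k$ with $\ell^2\subset\ell^{p_1}$ (valid since $p_1\ge2$) yields the first-order inequality.

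\emph{Expected difficulty.} The main obstacle is this first-order inequality at and near the admissible endpoints. There the H\"older factorization gives nothing extra, the Hardy constant degenerates, and the dyadic sum must be controlled without any room to spare. If the direct annulus argument fails there, my first fallback is to symmetrize: pass to the radial profile in polar coordinates and reduce to a one-dimensional weighted inequality of Bliss/Muckenhoupt type. The admissibility conditions $\alpha\le\theta$ and $\beta\le1-\theta$ should then appear exactly as the Muckenhoupt condition for that weight pair.
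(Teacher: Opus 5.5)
\textbf{There is a genuine gap: your H\"older reduction lands on a false inequality, and the lemma as printed is itself false.}

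\textbf{The failing step.} After factoring $|x|^{-\gamma}|u|=(|x|^{-(\alpha-1)}|u|)^{\theta}(|x|^{-\beta}|u|)^{1-\theta}$, everything rests on $\||x|^{-(\alpha-1)}u\|_{L^{p_1}}\le C\||x|^{-\alpha}\nabla u\|_{L^2}$, and this is false for every $p_1$. Its two sides scale like $\lambda^{\alpha-1-n/p_1}$ and $\lambda^{1+\alpha-n/2}$, which forces $\frac1{p_1}=\frac12-\frac2n$. For $n\le3$ (in particular $n=3$, the case the paper needs) this is negative, so no exponent exists. For $n\ge4$, a bump $\phi((x-x_0)/\mu)$ makes the ratio of the two sides of order $|x_0|/\mu$, which blows up as $\mu\to0$. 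Your annular Sobolev--Poincar\'e step fails for the same reason, since it needs $\frac1{p_1}\ge\frac12-\frac1n$. Nor is this Hardy's inequality: the weight on $u$ has lost a factor $|x|^{-1}$ instead of gaining one. The constant $(n-2-2\alpha)^{-1}$ you quote comes from the field $x|x|^{-2\alpha-2}$ and the genuine Hardy inequality $\||x|^{-(\alpha+1)}u\|_{L^2}\le\frac{2}{|n-2-2\alpha|}\||x|^{-\alpha}\nabla u\|_{L^2}$.

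\textbf{The root cause.} Your caveat misdiagnoses the problem: the issue is not that $\gamma$ is fixed rather than ranging, but that the fixed value lies outside the admissible range. Carried to the end, your dilation balance reads $\frac1p-\frac{\gamma}{n}=\theta(\frac12-\frac{1+\alpha}{n})+(1-\theta)(\frac1q-\frac{\beta}{n})$. With the printed $\gamma$ this becomes $\frac1p=\theta(\frac12-\frac2n)+\frac{1-\theta}{q}$, which is not \eqref{CKN:scaling} in general. Your concentrating-bump test forces $\frac1p\ge\theta(\frac12-\frac1n)+\frac{1-\theta}{q}$. Given the dilation relation, this is equivalent to $\gamma\ge\theta\alpha+(1-\theta)\beta$; it does not yield $\alpha\le\theta$ and $\beta\le1-\theta$. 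The two conditions are incompatible once $\theta>0$.

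Concretely, $n=3$, $\theta=\alpha=\frac12$, $\beta=0$, $q=2$, $p=6$, $\gamma=-\frac14$ satisfy every printed condition. Yet $\||x|^{1/4}u\|_{L^6}\le C\||x|^{-1/2}\nabla u\|_{L^2}^{1/2}\|u\|_{L^2}^{1/2}$ fails for $u=\phi((x-e_1)/\mu)$: the left side is of order $\mu^{1/2}$, the right side of order $\mu$. The ``only if'' direction fails too. The Hardy inequality $\||x|^{-1}u\|_{L^2}\le 2\|\nabla u\|_{L^2}$ in $\mathbb{R}^3$ has $\theta=1$, $\alpha=0$, $\gamma=1\neq\alpha-1$, so it violates the printed conditions.

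\textbf{What the paper does and the correct statement.} The paper proves nothing here; it only cites \cite{CKN} and \cite[Lemma 2.1]{Yong09}, and the proof block after the lemma is the proof of Theorem \ref{thm1}. The printed $\gamma$ looks like CKN's $\gamma=a\sigma+(1-a)\beta$ at $\sigma=\alpha-1$, copied without the sign flip that the weights $|x|^{-\gamma}$ require. In the present convention the correct requirement is $\gamma=\theta\sigma+(1-\theta)\beta$ with $\sigma\ge\alpha$, plus $\sigma\le\alpha+1$ in the endpoint case. This comes together with the dilation relation above and the side conditions of \cite{CKN}.

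For $\alpha\le\sigma\le\alpha+1$ your method is the right one. H\"older reduces matters to $\||x|^{-\sigma}u\|_{L^{p_1}}\le C\||x|^{-\alpha}\nabla u\|_{L^2}$ with $\frac1{p_1}=\frac12-\frac{1+\alpha-\sigma}{n}\in[\frac12-\frac1n,\frac12]$. This range runs from Hardy ($\sigma=\alpha+1$) to weighted Sobolev ($\sigma=\alpha$), and your annulus-plus-Hardy argument does prove it, for $p_1<\infty$ and $\alpha\neq\frac{n-2}{2}$.
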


\begin{proof}
Multiplying the first equation of (1.1) by $|x- x_0|^{-1}u$\footnote{
Formally, the argument in this paper corresponds to testing the Navier--Stokes
equations with the vector field $|x-x_0|^{-1}u$.
Since the weight $|x-x_0|^{-1}$ is singular at $x=x_0$, this test function is not
admissible in the weak formulation.
The argument is therefore justified rigorously.

Let $\eta_\varepsilon\in C_c^\infty(\mathbb{R}^3)$ be a radial cutoff function
satisfying
\[
\eta_\varepsilon(x)=0 \quad \text{for } |x-x_0|<\varepsilon,
\qquad
\eta_\varepsilon(x)=1 \quad \text{for } |x-x_0|>2\varepsilon,
\]
and $|\nabla\eta_\varepsilon|\lesssim \varepsilon^{-1}$.
We define the regularized test function
\[
\phi_\varepsilon(x,t)
=
\eta_\varepsilon(x)\,|x-x_0|^{-1}u(x,t),
\]
which is smooth and compactly supported away from $x_0$.
Hence $\phi_\varepsilon$ is an admissible test function in the weak formulation.}, and so integrating in
$\R^3$, it follows that
\[
\frac{1}{2}\frac{d}{dt}\int_{\R^3}|x- x_0|^{-1}|u|^2
dx+\int_{\R^3}|Du|^q|x- x_0|^{-1}\,dx
\]
\begin{equation}\label{main-es-1}
\lesssim \Big|\int_{\R^3}|u|^3\nabla |x- x_0|^{-1}\,dx\Big|+\Big|\int_{\R^3}\Phi u \nabla
\cdot |x- x_0|^{-1}u\,dx\Big|+\Big|\int_{\R^3}|Du|^{q-1} : \nabla (|x- x_0|^{-1})|u|\,dx\Big|,
\end{equation}
where we use the integration by parts and the divergence free
conditions of $u$. The last term in \eqref{main-es-1}, it is estimated 
\[
\int_{\R^3}|Du|^{q-1} : \nabla (|x- x_0|^{-1})|u|\, dx
\]
\[
=\int_{\R^3}|Du|^{q-1}|x- x_0|^{\frac{q-1}{q}}|x- x_0|^{-\frac{q-1}{q}} : \nabla (|x- x_0|^{-1})|u|\, dx
\]
\[
\lesssim \int_{\R^3}|Du|^{q-1}|x- x_0|^{-\frac{q-1}{q}} |x- x_0|^{-1-\frac{1}{q}}|u|\, dx
\]
\[
\lesssim \Big(\int_{\R^3}|\nabla u|^{q}|x- x_0|^{-1}\,dx\Big)^{\frac{q-1}{q}} \Big(\int_{\R^3}|u|^{q}|x- x_0|^{-q-1}\,dx\Big)^{\frac{1}{q}}
\]
\[
\leq \frac{1}{16}\int_{\R^3}|\nabla u|^{q}|x- x_0|^{-1}\,dx+C_q\int_{\R^3}|u|^{q}|x- x_0|^{-q-1}\,dx
\]
\[
\leq\frac{1}{16}\int_{\R^3}|\nabla u|^{q}|x- x_0|^{-1}\,dx +C_q\||x-x_0|^{-1}| u|\|^{q}_{L^2}.
\]
Here, the following estimates is used: for $ q> 2$
\begin{align*}
&\int_{\mathbb{R}^3} |u|^{q} |x-x_0|^{-q-1} \, dx = \left\| |x-x_0|^{-1-\frac{1}{q}} u \right\|_{L^q}^q \\
&\quad \lesssim \left\| |x-x_0|^{-\frac{1}{2}} \nabla u \right\|_{L^q}^{q\theta} \left\| |x-x_0|^{-1} u \right\|_{L^2}^{q(1-\theta)}, \quad \text{with } \theta = \frac{3q-4}{4q-6} \\
&\quad \leq \frac{1}{16} \left\| |x-x_0|^{-\frac{1}{2}} \nabla u \right\|_{L^q}^{q} + C_q \left\| |x-x_0|^{-1} u \right\|_{L^2}^{q}.
\end{align*}
The equality \eqref{main-es-1} becomes 
\[
\frac{d}{dt}\int_{\R^3}|x- x_0|^{-1}|u|^2
dx+\int_{\R^3}|\nabla u|^q|x- x_0|^{-1}\,dx
\]
\begin{equation}\label{main-es-2}
\lesssim \Big|\int_{\R^3}|u|^3\nabla |x- x_0|^{-1}\,dx\Big|+\Big|\int_{\R^3}\Phi u \nabla
\cdot |x- x_0|^{-1}u\,dx\Big|+\Big|\||x-x_0|^{-1}| u|\|^{q}_{L^2}\Big|
\end{equation}
\[
:=I_1+I_2+I_3.
\]
Next, it is estimated the
term $I_1$ by H\"{o}lder's inequality as follows:
\[
I_1\lesssim \int_{\R^3}\frac{|u||u|^2}{|x- x_0|^2}\lesssim\|x-
x_0|^{-\frac{4}{3}}|u|^2\|_{L^{\frac{3}{2}}}\|x-x_0|^{-\frac{2}{3}}u\|_{L^3}
\lesssim\|x- x_0|^{-\frac{2}{3}}u\|^3_{L^{3}}.
\]
For $I_2$, taking the $\rm div$ operator on both sides of the equation
\eqref{NNS}, the pressure equation is written by
\[
-\Delta \Phi = \text{div}\text{div}(u\otimes u+|Du|^{q-2}Du).
\]
Using the Stein's inequality in
\cite{Stein}, we obtain
\begin{equation}\label{stein}
\||x- x_0|^{\delta}\Phi\|_{L^{\gamma}}\lesssim(\|x
-x_0|^{\delta}|u|^2\|_{L^{\gamma}}+\|x
-x_0|^{\delta}|Du|^{q-1}\|_{L^{\gamma}}),
\end{equation} where
\[
 1 <\gamma < \infty \quad \text{and} \quad - \frac{3}{\gamma}  < \delta < \frac{3(\gamma-1)}{\gamma}.
\]
By the H\"{o}lder inequality with \eqref{stein} and Young's inequality, it follows that
\[
I_3\lesssim\int_{\R^3}\Phi u \nabla \cdot |x- x_0|^{-1}dx\lesssim \|x-
x_0|^{-\frac{4}{3}}\Phi\|_{L^{\frac{3}{2}}}\|x-x_0|^{-\frac{2}{3}}u\|_{L^3}
\]
\[
\lesssim \Big(\|x- x_0|^{-\frac{4}{3}}|u|^2\|_{L^{\frac{3}{2}}}+\|x-
x_0|^{-\frac{4}{3}}|Du|^{q-1}\|_{L^{\frac{3}{2}}}\Big)\|x-x_0|^{-\frac{2}{3}}u\|_{L^3}
\]
\begin{equation}\label{aa115}
\lesssim(\|x- x_0|^{-\frac{2}{3}}u\|^3_{L^{3}}+\|x-
x_0|^{-\frac{4}{3}}|\nabla u|^{q-1}\|^{\frac{3}{2}}_{L^{\frac{3}{2}}}).
\end{equation}
%

By Lemma \ref{wip} and Young's inequality, we observe that
\[
\||x- x_0|^{-\frac{2}{3}}u\|^3_{L^3}\lesssim\||x-x_0|^{-\frac{1}{q}}\nabla u\|^{3\theta_1}_{L^q}\||x
-x_0|^{\kappa}u\|^{3(1-\theta_1)}_{L^{\delta}}
\]
\begin{equation}\label{thm2-l3}
\leq \frac{1}{16}\|x -x_0|^{-\frac{1}{q}}\nabla u\|^q_{L^q} + C\||x
-x_0|^{\kappa}u\|^{3(1-\theta_1)\frac{q}{q-3\theta_1}}_{L^{\delta}}
\end{equation}
\begin{equation}\label{thm2-l3}
\leq \frac{1}{16}\|x -x_0|^{-\frac{1}{q}}\nabla u\|^q_{L^q} + C\||x
-x_0|^{-\frac{1}{2}}u\|^{2}_{L^2}\||x
-x_0|^{\beta}|\nabla u|\|^{s}_{L^{r}},
\end{equation}
where
\[
\frac{1}{3}+\frac{-\frac{2}{3}}{3}=\theta_1\Big(\frac{1}{q}+\frac{-\frac{1}{q}-1}{3}\Big)+(1-\theta_1)\Big(\frac{1}{\delta}+\frac{\kappa}{3}\Big),
\]
\[
\frac{1}{\delta}+\frac{\kappa}{3}=\theta_2\Big(\frac{1}{2}+\frac{-\frac{1}{2}}{3}\Big)+(1-\theta_2)\Big(\frac{1}{r}+\frac{\beta-1}{3}\Big),
\]and
\[
2+s=3(1-\theta_1)\frac{q}{q-3\theta_1}\quad 3\theta_2(1-\theta_1)\frac{q}{q-3\theta_1}=2.
\]
From the relations, it knows
\[
\theta_1=\frac{qs-q}{3(2+s-q)},\quad \theta_2=\frac{2(3-q)}{6+(3-q)s-2q},\quad \frac{1}{\delta}
+\frac{\kappa}{3}=\frac{18-9q+5sq-8s}{6(6+(3-q)s-2q)}.
\]
Through Lemma \ref{wip}, it also knows
\[
-\frac{2}{3}=\theta_1 \sigma_1+(1-\theta_1)\kappa,\quad \sigma_1\leq-\frac{1}{2},
\]
and
\[
\kappa =(1-\theta_2)\sigma_2 - \frac{1}{2}\theta_2,\quad \sigma_2 \leq\beta.
\]
 Equations (3.6) and (3.7) can be solved as
\[
\kappa\geq -\frac{qs-3s-3+q}{6+(3-q)s-2q},\quad \beta\geq -1.
\]
And thus, it choses $\delta$ and $\kappa$ as
\[
\delta=\frac{6(6+(3-q)s-2q)}{20-9q+5sq-8s},\quad\mbox{and}\quad \kappa= -\frac{1}{6+(3-q)s-2q}.
\]
Hence, we finally obtain with the aid of the inequality
\eqref{thm2-l3}
\begin{equation}\label{aaaaa}
\frac{d}{dt}\||x-
x_0|^{-\frac{1}{2}}|u|\|^2_{L^2(\R^3)}
+\int_{\R^3} |x- x_0|^{-1}|\nabla u|^qdx
\end{equation}
\[
\lesssim \||x
-x_0|^{-\frac{1}{2}}u\|^{2}_{L^2(\R^3)}\||x
-x_0|^{\beta}|\nabla u|\|^{s}_{L^{r}(\R^3)}+\|x-
x_0|^{-\frac{4}{3}}|\nabla u|^{q-1}\|^{\frac{3}{2}}_{L^{\frac{3}{2}}}+\||x-x_0|^{-1}| u|\|^{q}_{L^2}.
\]
Integrate both sides of the estimate \eqref{aaaaa} above on time, after then,
taking the supremum for $x_0\in \R^3$, it follows
\[
\sup_{x_0\in R^3}\||x-
x_0|^{-\frac{1}{2}}|u(\cdot,\tau)|\|^2_{L^2(\R^3)}+\sup_{x_0\in R^3}\int_0^T\int_{\R^3}|x- x_0|^{-1}|\nabla u(\cdot, \tau)|^2 dx d\tau
\]
\[
 \lesssim\sup_{x_0\in R^3}\||x-
x_0|^{-\frac{1}{2}}|u_0|\|^2_{L^2(\R^3)}
\]
\[
+\sup_{x_0\in R^3}\int_0^T \||x-
x_0|^{\beta}u(\cdot,\tau)\|^s_{L^r(\R^3)}\||x-
x_0|^{-\frac{1}{2}}|u(\cdot,\tau)|\|^2_{L^2(\R^3)}\, d\tau
\]
\[
+\sup_{x_0\in R^3}\int_0^T \|x-
x_0|^{-\frac{4}{3}}|\nabla u(\cdot,\tau)|^{q-1}\|^{\frac{3}{2}}_{L^{\frac{3}{2}}}\, d\tau+\sup_{x_0\in R^3}\int_0^T\||x-x_0|^{-1}| u(\cdot,\tau)|\|^{q}_{L^2}, d\tau
\]
By the standard Gronwal's inequality for $q<2$, we obtain
\begin{equation}\label{u-result}
\sup_{x_0\in R^3}\int_0^T\int_{\R^3}|x- x_0|^{-1}|\nabla u|^2 dx dt<\infty.
\end{equation}
For $t > 0$ and $x \in \R^3$, due to the \eqref{u-result}, we have
\[
\limsup_{r\rightarrow0}\frac{1}{r^{5-2q}}\int_{t-r^2}^t\int_{B_r(x)}|\nabla
u(y,\tau)|^q\,dy\,d\tau
\leq
\limsup_{r\rightarrow0}\int_{t-r^2}^t\int_{\R^3}|y-x|^{-1}|\nabla
u(y,\tau)|^2\,dy\,d\tau=0.
\]
%
%

Finally, since $B_r(x) \subset \mathbb{R}^3$, we conclude that
\[
\frac{1}{r^{5-2q}}
\int_{t-r^2}^t
\int_{B_r(x)} |\nabla u|^q
\lesssim
\int_{t-r^2}^t
\int_{\mathbb{R}^3} |y-x|^{-1} |\nabla u|^2 .
\]

This implies the solution $u$ is regular for $0< t \leq T$, by
Proposition \ref{ep-reg}.

 
\end{proof}

\end{document}